\newtheorem{thm}{Theorem}
\newtheorem{lem}{Lemma}
\theoremstyle{definition}
\newtheorem{defn}{Definition}
\newtheorem*{remark}{Remark}
\newcommand{\X}{\bold{\mathcal{X}}}
\newcommand{\Sx}{S_x}
\newcommand{\Su}{S_u}
\newcommand{\Uo}{\bold{\mathcal{U}}}
\newcommand{\R}{\mathbb{R}}
\newcommand{\mydot}{\boldsymbol{\cdot}}
\title{\LARGE \bf
Adversarial Model Predictive Control via Second-Order Cone Programming
}
\author{James Guthrie and Enrique Mallada% <-this % stops a space
\thanks{$^{1}$James Guthrie and Enrique Mallada are with the Johns Hopkins University, Baltimore, Maryland, USA.
        {\tt\small jguthri6@jhu.edu, mallada@jhu.edu}}%
\thanks{The work was supported by ARO through contract W911NF-17-1-0092, US DoE EERE award DE-EE0008006, and NSF through grants CNS 1544771, EPCN 1711188, AMPS 1736448, and CAREER 1752362.}
}
\begin{document}

\maketitle
\thispagestyle{empty}
\pagestyle{empty}

%%%%%%%%%%%%%%%%%%%%%%%%%%%%%%%%%%%%%%%%%%%%%%%%%%%%%%%%%%%%%%%%%%%%%%%%%%%%%%%%
\begin{abstract}
We study the problem of designing attacks to safety-critical systems in which the adversary seeks to maximize the overall system cost within a model predictive control framework. Although in general this problem is NP-hard, we characterize a family of problems that can be solved in polynomial time via a second-order cone programming relaxation. In particular, we show that positive systems fall under this family. We provide examples demonstrating the design of optimal attacks on an autonomous vehicle and a microgrid.
\end{abstract}

%%%%%%%%%%%%%%%%%%%%%%%%%%%%%%%%%%%%%%%%%%%%%%%%%%%%%%%%%%%%%%%%%%%%%%%%%%%%%%%%
\section{INTRODUCTION}
Safety-critical systems increasingly rely on distributed feedback for their underlying control algorithms. In these cyber-physical systems, the action of individual agents is impacted by the state of other agents which is either sensed directly or obtained over communication channels. Common examples include, for instance, power grids and vehicle platoons. Given the critical nature of these systems, it is essential to ensure that the control algorithms utilized are robust to adversarial attacks which can take many forms. 

For example, in false data injection attacks, an adversary takes control over communication channels and corrupts the feedback data to compromise the system performance. Much recent work has focused on designing and detecting false data injection attacks within power systems \cite{Liu2009, Pas2013, Jin2017}. Alternatively, instead of corrupting feedback channel information, an attacker could compromise existing agents or introduce new adversarial agents with the aim of degrading system performance. Examples include adding a rogue car to a vehicle platoon \cite{Deb2015} or malicious demand response in power grids \cite{Bro2018}. Lastly, instead of injecting false sensor data or introducing adversarial agents, the attacker might take over the whole system and control it with an antagonistic algorithm \cite{Lip2016} that maximizes damage. 

Performance of these cyber-physical systems is often measured with respect to a convex quadratic cost function. For example, consensus problems seek to minimize the disagreement between agents. Regulation problems seek to minimize the deviation from a desired equilibrium condition. In designing attacks on these systems, it is therefore natural to seek to maximize these same objectives. This leads to a non-convex problem which is NP-hard in general. Due to the computational complexity, suboptimal solutions are typically sought via convex-concave approximations \cite{Lip2016}, semidefinite relaxations \cite{Jin2017}, or general nonlinear programming methods. Alternatively, an attacker may avoid the non-convex problem by selecting a target state (which is different from the system's intended operational state) and minimizing deviations from it \cite{Che2018}. While the resulting problem is convex, the choice of target state is arbitrary and up to the attacker to determine. Thus the target state often acts as a surrogate for true adversarial intentions.       
      
This paper seeks a different approach. Instead of looking for suboptimal or surrogate solutions, we focus on instances in which the non-convex problem can be solved to global optimality. By leveraging optimality guarantees for second-order cone program (SOCP) relaxations of non-convex quadratically-constrained quadratic programs (QCQPs), we provide a characterization of a family of systems that are highly susceptible to adversarial attacks. Surprisingly, the characterized family includes, as a special case, positive systems with non-positive quadratic objectives and constraints. 

This has application to many cyber-physical systems, including micro-grids \cite{Tei2015} and vehicle platoons \cite{Ran2011} which often exhibit positive dynamics. Our results suggest that these systems are highly vulnerable to adversarial attacks and promotes the need of further research into the development of new methodologies that can make these systems less vulnerable to such attacks.

The rest of the paper is organized as follows. Section II introduces some preliminaries, including the formal definition of QCQP, an overview of the SOCP relaxation used in this paper, and the definition of positive systems. Section III formalizes the adversarial MPC problem to be used in this paper, as well some useful reformulations. Section IV establishes conditions under which a non-convex MPC problem has an exact SOCP relaxation. Section V provides a few numerical illustrations of our approach, and Section VI concludes the paper and discusses future directions.

\newcommand{\Zero}{{0}}
\subsection{Notation}
Let $\mathbb{S}^n$ denote the set of $n \times n$ symmetric matrices, $\mathbb{N}$ denote the set of non-negative integers, $\mathbb{N}^+$ the set of positive integers, and $A^{T}$ denote the transpose of a matrix $A$. Let $a_j$ denote the element $j$ of vector $a \in \R^n$ and $[A]_{jk}$ denote element $(j,k)$ of matrix $A$. The inequalities $\leq$, $\geq$ are to be interpreted element-wise.
$I_n$ denotes the $n \times n $ identity matrix, $\Zero_{m \times n}$ the $m \times n$ zero matrix, and $\mathds{1}_n$ a vector in $\R^n$ with all entries equal to $1$. We occasionally drop subscripts where dimensions can be inferred from context. 
For $A$, $B \in \mathbb{S}^n$, let $A \mydot B = \sum_{j=1}^{n}\sum_{k=1}^{n}[A]_{jk}[B]_{jk}$.
% A sign vector $\bold{\sigma} \in \R^n$ whose entries are either $-1$ or $+1$ is denoted by $\bold{\sigma} \in \{-1,+1\}^n$. 

\section{PRELIMINARIES}
\subsection{Exact Solutions of Some Non-Convex QCQPs}
We first review the main result of \cite{Kim2003} regarding the conditions under which non-convex QCQPs can be solved exactly via a SOCP relaxation. Consider the following QCQP
\begin{mini}|s|
{{z}}{{z}^{T}{Q}_0{z}+2{q}_0^{T}{z} + \gamma_0}
{}{}
\addConstraint{{z}^{T}{Q}_i{z}+2{q}_i^{T}{z} + \gamma_i \leq 0 }{,\quad}{i = 1,\hdots,m}
%\addConstraint{{z}^{T}{Q}_i{z}+2{q}_i^{T}{z} + \gamma_i \leq 0, i = 1,\hdots,m}{}{}
\label{eq:QCQP} 
\end{mini} 
\par\noindent
where $z \in \R^n$, $Q_p \in \mathbb{S}^n$, $q_p \in \R^n$, $\gamma_p \in \R$ and $p \in \{0,1,...,m\}$. Define the following matrix: 
\begin{equation} \label{eq:M_def}
{P}_p = \begin{bmatrix}
\gamma_p && {q}_p^{T} \\
{q}_p && {Q}_p
\end{bmatrix} 
\end{equation}
We rewrite the QCQP in homogeneous form as:
\newcommand{\z}{\begin{bmatrix}
1 \\
{z}
\end{bmatrix}}
\begin{mini}|s|[0]
{{z}}
{\z^{T}{P}_0 \z}{}{}
{\label{homQCQP}}
\addConstraint{\z^{T}{P}_i\z}{\leq 0,}{\quad i = 1,\hdots,m}
\end{mini}
where $z \in \R^n$, and $P_p \in \mathbb{S}^{n+1}$ for $p \in \{0,1,...,m\}$. 

Herein we make no assumptions about the sign definiteness of matrices $P_p$. When $P_0$ contains at least one negative eigenvalue, problem (\ref{homQCQP}) is non-convex and NP-hard to solve \cite{Par1991}. In \cite{Kim2003} it was shown that if the matrices collectively satisfy a specific sign property (defined below), the non-convex QCQP can be solved to global optimality via a second-order cone program. For convenience, we restate the relevant definitions and theorem of \cite{Kim2003}.

%Def. 3.2 of Kim
\begin{defn}[\cite{Kim2003}]  \label{def:odnp}
A symmetric matrix $A \in  \mathbb{S}^n $ is said to be \textit{almost off-diagonal non-positive} if 
there exists a sign vector $\bold{\sigma} \in \{-1,+1\}^n$  such that
$[A]_{jk}\sigma_j \sigma_k \leq 0, \ (0 \leq j < k \leq n) $
\end{defn}
%Def 3.3 of Kim
\begin{defn}[\cite{Kim2003}] \label{def:odnp_family}
A family of symmetric matrices $A_p \in \mathbb{S}^n \ (0 \leq p \leq m)$ is said to be \textit{uniformly almost off-diagonal non-positive}
if there exists a sign vector $\sigma \in \{-1,+1\}^n $ such that $[A_p]_{jk}\sigma_j \sigma_k \leq 0, \ (1 \leq j < k \leq n, \ 0 \leq p \leq m) $
\end{defn}  %\label{def:odnp_family} \tag{

\newcommand{\zt}{\begin{bmatrix}
z_0^{T} && \bold{z}^{T}
\end{bmatrix}}

\newcommand{\ztx}{\begin{bmatrix}
\sigma_0 \sigma_1 \sqrt{[X]_{11}} && \hdots && \sigma_0 \sigma_n \sqrt{[X]_{nn}}
\end{bmatrix}}

\newcommand{\norm}[1]{\left\lVert#1\right\rVert}
\newcommand{\normentry} { \begin{bmatrix} [X]_{jj} - [X]_{kk} 
\\ 2[X]_{jk}
\end{bmatrix}}
%Theorem 3.5 of Kim
\begin{thm}[\cite{Kim2003}] \label{thmSOCP}
Consider a QCQP of the form (\ref{homQCQP}) in which the family of symmetric
%\footnote{ Notice that even though \cite[Theorem 3.5]{Kim2003} states that $P_p \in \mathbb{S}^{n+1}_+$, its proof clearly only requires $P_p \in \mathbb{S}^{n+1}$. 
% indicating it must be positive semi-definite, not just symmetric. The $+$ subscript is a typo as confirmed by private correspondence with the authors on Feb. 29, 2019.} 
matrices $P_p \in \mathbb{S}^{n+1} ,\ (0 \leq p \leq m) $
is uniformly almost off-diagonal non-positive with respect to a sign vector $\sigma \in \{-1,+1\}^{n+1}$. Let $\Lambda = \{(j,k): [P_p]_{jk} \neq 0 $ for some $0 \leq p \leq m, \ 0 \leq j < k \leq n\}$.
Then
\begin{equation}
z = \ztx^T
\label{eq:z}
\end{equation}
is an optimal solution of (\ref{homQCQP}) where $X$ is the optimal solution of the following second-order cone program:
%\begin{mini}|s|[0]
%{{X}}{{P}_0 \mydot {X}}{}{}
%\addConstraint{P_i \mydot X} {\leq 0}{\ \     i = 1,\hdots,m}
%\addConstraint{[X]_{00} = } {1} {}
%\addConstraint{\norm{\normentry}_2 \leq [X]_{jj} + [X]_{kk} ,(j,k) \in \Lambda} {} {} \label{eq:SOCP}
%\end{mini} 
\begin{equation}
\begin{aligned}
\min_{X} &\quad {P}_0 \mydot {X}\\
\emph{\text{s.t.}} &\quad P_i \mydot X \leq 0, \quad i = 1,\hdots,m,\\
                    &\quad [X]_{00} = 1,  \\
                    &\quad \norm{\normentry}_2 \leq [X]_{jj} + [X]_{kk} ,(j,k) \in \Lambda \\
\end{aligned}
\label{eq:SOCP}
\end{equation}
\end{thm}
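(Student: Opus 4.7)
The plan is to view the SOCP (\ref{eq:SOCP}) as a convex relaxation of (\ref{homQCQP}) obtained by replacing the rank-one matrix $\tilde z\tilde z^{T}$ (where $\tilde z:=[1;z]\in\R^{n+1}$) with a symmetric matrix $X$, and to recover a primal optimizer of the QCQP by extracting diagonal information from the SOCP optimum. First I would check that for any $z$ feasible for (\ref{homQCQP}), $X:=\tilde z\tilde z^{T}$ is feasible for (\ref{eq:SOCP}) with the same objective value: $[X]_{00}=1$, $P_i\mydot X=\tilde z^{T}P_i\tilde z\le 0$, and the rotated-cone inequalities hold with equality since $|[X]_{jk}|=\sqrt{[X]_{jj}[X]_{kk}}$. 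Hence the SOCP's optimal value is a lower bound on that of (\ref{homQCQP}).

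Now let $X^{\star}$ solve (\ref{eq:SOCP}) and define $\tilde z\in\R^{n+1}$ entrywise by $\tilde z_j=\sigma_0\sigma_j\sqrt{[X^{\star}]_{jj}}$; since $[X^{\star}]_{00}=1$, the zeroth coordinate is $\tilde z_0=1$, and the remaining entries coincide with the $z$ given in (\ref{eq:z}). Setting $Z:=\tilde z\tilde z^{T}$, one computes $[Z]_{jk}=\sigma_j\sigma_k\sqrt{[X^{\star}]_{jj}[X^{\star}]_{kk}}$ for all $j,k$, so $Z$ and $X^{\star}$ share the same diagonal. Consequently
\[
P_p\mydot X^{\star}-P_p\mydot Z \;=\; 2\!\!\sum_{0\le j<k\le n}\!\![P_p]_{jk}\bigl([X^{\star}]_{jk}-\sigma_j\sigma_k\sqrt{[X^{\star}]_{jj}[X^{\star}]_{kk}}\bigr),
\]
so the plan reduces to showing this difference is non-negative for every $p\in\{0,1,\dots,m\}$.

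The key step is a sign argument on each off-diagonal term. Squaring the rotated-cone inequality $\norm{\normentry}_2\le[X^{\star}]_{jj}+[X^{\star}]_{kk}$ yields the bound $|[X^{\star}]_{jk}|\le\sqrt{[X^{\star}]_{jj}[X^{\star}]_{kk}}$ for every $(j,k)\in\Lambda$, while for $(j,k)\notin\Lambda$ the coefficient $[P_p]_{jk}$ is zero by definition and the term vanishes. Combined with the uniform off-diagonal sign condition $[P_p]_{jk}\sigma_j\sigma_k\le 0$, a short case analysis on $s:=\sigma_j\sigma_k\in\{-1,+1\}$ shows $[P_p]_{jk}\bigl([X^{\star}]_{jk}-s\sqrt{[X^{\star}]_{jj}[X^{\star}]_{kk}}\bigr)\ge 0$: if $s=+1$ then $[P_p]_{jk}\le 0$ and $[X^{\star}]_{jk}\le\sqrt{[X^{\star}]_{jj}[X^{\star}]_{kk}}$, while if $s=-1$ then $[P_p]_{jk}\ge 0$ and $[X^{\star}]_{jk}\ge-\sqrt{[X^{\star}]_{jj}[X^{\star}]_{kk}}$; in both cases the product is non-negative.

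Summing, $P_p\mydot Z\le P_p\mydot X^{\star}\le 0$ for $p=1,\dots,m$, so $\tilde z$ (equivalently, $z$) is feasible for (\ref{homQCQP}); and $P_0\mydot Z\le P_0\mydot X^{\star}$, so $z$ attains an objective value no worse than the SOCP optimum, which by the first paragraph already equals the QCQP optimum. The main and essentially only obstacle is the sign argument in the third paragraph: it is precisely where the uniformly almost off-diagonal non-positive hypothesis is invoked, and it is what guarantees that ``collapsing'' the SOCP optimum $X^{\star}$ to the rank-one matrix $Z$ can only move every quadratic form $P_p\mydot(\cdot)$ in the favorable direction.
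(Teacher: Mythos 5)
The paper does not actually prove this theorem: it is imported verbatim from \cite{Kim2003} (``we restate the relevant definitions and theorem of \cite{Kim2003}''), so there is no in-paper argument to compare yours against. Judged on its own, your proof is correct and is essentially the standard Kim--Kojima argument: show the SOCP is a relaxation (any feasible $z$ yields a feasible $X=\tilde z\tilde z^{T}$ with equal objective), then round the SOCP optimizer $X^{\star}$ to the rank-one $Z=\tilde z\tilde z^{T}$ built from its diagonal and the sign vector, and use the squared rotated-cone inequality $|[X^{\star}]_{jk}|\le\sqrt{[X^{\star}]_{jj}[X^{\star}]_{kk}}$ together with $[P_p]_{jk}\sigma_j\sigma_k\le 0$ to conclude $P_p\mydot Z\le P_p\mydot X^{\star}$ for every $p$. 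Your sign case analysis correctly covers the $j=0$ row as well (which is where the linear terms $q_p$ live), consistent with Definition~\ref{def:odnp} rather than the narrower index range in Definition~\ref{def:odnp_family}, which appears to be a typo in the paper. The only loose end, inherited from the theorem statement itself rather than introduced by you, is the well-definedness of $\sqrt{[X^{\star}]_{jj}}$: the cone constraints force $[X^{\star}]_{jj}\ge 0$ only for indices $j$ appearing in some pair of $\Lambda$, so for a fully rigorous statement one should either note that indices absent from $\Lambda$ decouple (and can be handled separately) or assume nonnegativity of the diagonal.
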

\subsection{Positive Systems}
Consider a discrete-time linear system
\begin{equation} \label{eq:discrete_dynamics}
x(k+1) = Ax(k) + Bu(k)
\end{equation}
where $k \in \mathbb{N}, x(k) \in \mathbb{R}^{n_x}$ and $u(k) \in \mathbb{R}^{n_u} $. Let $x(0)$ denote the initial state of the system. 
\begin{defn} \label{def:positiveSystem}A discrete-time linear system is said to be \textit{positive} if $A \geq 0$ and $B \geq 0$. 
\end{defn}
\begin{lem}[\cite{Ran2018}] \label{lem:positiveSystem} Consider a positive system $(A,B)$ with initial condition $x(0) \geq 0$. Given an input sequence $u(k) \geq 0$, $(0 \leq k \leq N-1)$, then $x(k) \geq 0$, $(1 \leq k \leq N)$. 
\end{lem}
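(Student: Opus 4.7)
The plan is to prove this by straightforward induction on $k$, exploiting the fact that the element-wise product of a non-negative matrix with a non-negative vector is non-negative (since each entry is a sum of products of non-negative reals).

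For the base case $k = 1$, I would expand $x(1) = A x(0) + B u(0)$ using the recursion~\eqref{eq:discrete_dynamics}. Since $A \geq 0$ and $x(0) \geq 0$ component-wise, every entry of $A x(0)$ is of the form $\sum_j [A]_{ij}\, x_j(0)$, which is a sum of products of non-negative reals and hence non-negative; the same argument applied to $B \geq 0$ and $u(0) \geq 0$ gives $B u(0) \geq 0$. Summing two element-wise non-negative vectors yields $x(1) \geq 0$.

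For the inductive step, I would suppose $x(k) \geq 0$ for some $k$ with $1 \leq k \leq N-1$ and repeat the same argument: $x(k+1) = A x(k) + B u(k)$, with $A, B \geq 0$, $x(k) \geq 0$ by the inductive hypothesis, and $u(k) \geq 0$ by assumption, so both $A x(k)$ and $B u(k)$ are non-negative element-wise, giving $x(k+1) \geq 0$. Iterating until $k+1 = N$ covers the full range $1 \leq k \leq N$.

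There is no real obstacle here; the proof is essentially a verification that the cone $\mathbb{R}^{n_x}_{\geq 0}$ is invariant under the map $x \mapsto Ax + Bu$ whenever $A, B$ have non-negative entries and $u$ lies in the non-negative orthant. The only care needed is to keep the index ranges in the induction consistent with the statement (i.e., the inputs range over $0 \leq k \leq N-1$ while the state conclusion is over $1 \leq k \leq N$).
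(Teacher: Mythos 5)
Your proof is correct. The paper itself states this lemma without proof, simply citing \cite{Ran2018}; your induction argument --- base case $x(1) = Ax(0) + Bu(0) \geq 0$ from the non-negativity of $A$, $B$, $x(0)$, $u(0)$, then propagating forward via $x(k+1) = Ax(k) + Bu(k)$ with careful index bookkeeping --- is exactly the standard argument one would supply, and it is complete.
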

\section{PROBLEM SETUP}
\subsection{Model Predictive Control}

Consider a discrete-time, linear system model
\begin{equation} \label{eq:discrete_dynamics}
x(k+1) = Ax(k) + Bu(k)
\end{equation}
\par\noindent
where $ x(k) \in \mathbb{R}^{n_x}$ and $u(k) \in \mathbb{R}^{n_u} $. Let $x(0)$ denote the initial state of the system. To reduce notational clutter, we will write $x(0)$ as $x_0$ in the following. Standard linear MPC determines the optimal sequence of control actions over a prediction horizon $N \in \mathbb{N}^+$ to minimize a given quadratic cost function while respecting constraints on the system states and controls \cite{Bor2017}.
For convenience, define the following:
\begin{equation}
\begin{gathered}
\X = 
\begin{bmatrix} 
x(0)\\
x(1)\\
\vdots \\
x(N)
\end{bmatrix}
\Uo = 
\begin{bmatrix} 
u(0)\\
u(1)\\
\vdots \\
u(N-1)
\end{bmatrix}
\Sx = 
\begin{bmatrix} 
I\\
A\\
\vdots \\
A^N
\end{bmatrix} 
\\
\Su = 
\begin{bmatrix} 
0 && \hdots && \hdots && 0 \\
B && 0 && \hdots && 0 \\
AB && \ddots && \ddots && \vdots \\
\vdots && \ddots && \ddots && 0 \\
A^{N-1}B && \hdots && \hdots && B
\end{bmatrix} 
\end{gathered} \label{eq:XUSxSu}
\end{equation}

The system dynamics over the horizon $N$ then evolve according to
\begin{equation} \label{eq:dynCon}
\X = \Sx x_0 + \Su \Uo
\end{equation}

\newcommand{\XUo}{$\begin{bmatrix} \X \\ \Uo \end{bmatrix}$}
The MPC cost and constraint functions will be represented by generic quadratic functions of the form
\begin{multline} \label{eq:F}
F_i(\X,\Uo) = \\
\begin{bmatrix} \X \\ \Uo \end{bmatrix}^T \begin{bmatrix} Q_i && S_i \\ S_i^{T} && R_i \end{bmatrix} \begin{bmatrix} \X \\ \Uo \end{bmatrix} + 2
\begin{bmatrix} q_i \\ r_i \end{bmatrix}^T \begin{bmatrix} \X \\ \Uo  \end{bmatrix} + \gamma_i
\end{multline} 
\newline
where $\smash{Q_i \in \mathbb{S}^{(N+1)n_x}, R_i \in \mathbb{S}^{Nn_u}, S_i \in \mathbb{R}^{(N+1)n_x \times Nn_u}},$
\newline
$q_i \in \R^{(N+1)n_x}, r_i \in \R^{Nn_u}, \gamma_i \in \R$.
\newline The MPC problem can then be written compactly as:
\begin{mini}|s|[2] 
{\bold{\X, \Uo}}{F_0(\X,\Uo)}
{}{}
\addConstraint{\X}{= \Sx x_0 + \Su \Uo}
\addConstraint{F_i(\X,\Uo)}{\leq 0,}{\quad i = 1,\hdots,m}
\label{eq:MPC_Unc}
\end{mini} 
In this formulation, both the state $\X$ and control sequence $\Uo$ are decision variables.
\subsection{Condensed MPC}
We next project the quadratic cost and constraint functions $F_i(\X,\Uo)$ onto the dynamic equality constraint (\ref{eq:dynCon}), eliminating the state vector $\X$ as a decision variable. This is often referred to as the  condensed formulation as the resulting problem is of smaller dimension but with less sparsity in the matrices. We substitute (\ref{eq:dynCon}) for $\X$ in (\ref{eq:F}) and define a new quadratic function of the form
\begin{multline} \label{eq:Gdef1}
G_i(x_0,\Uo) = \Uo^T M_i \Uo + 2(x_0^TN_i + d_i^T)\Uo \\+ x_0^TT_ix_0 + 2v_i^Tx_0 + \gamma_i
\end{multline}
where
\begin{align} 
M_i& = S_u^TQ_iS_u + S_u^TS_i + S_i^TS_u + R_i \label{eq:Mi} \\
N_i& = S_x^TQ_iS_u + S_x^TS_i \label{eq:Ni} \\
d_i& = S_u^Tq_i + r_i \label{eq:di} \\
T_i& = S_x^TQ_iS_x \label{eq:Ti} \\
v_i& = S_x^Tq_i \label{eq:vi}
\end{align}
The condensed MPC formulation is then written as:
\begin{mini}|s| 
{\Uo}{G_0(x_0,\Uo)}
{}{}
\addConstraint{G_i(x_0,\Uo)}{\leq 0,}{\quad i = 1,\hdots,m}
\label{eq:MPC_Con}
\end{mini} 
Lastly, we rewrite the functions $G_i(x_0,\Uo)$ in homogeneous form by defining the following matrix: 
\begin{equation} \label{eq:Pix0}
P_i(x_0) = \begin{bmatrix} 
x_0^TT_ix_0 + 2v_i^Tx_0 + \gamma_i && (x_0^TN_i + d_i^T) \\
(N_i^Tx_0 + d_i) && M_i
\end{bmatrix}
\end{equation} 
\newcommand{\Uxx}{
\begin{bmatrix}
1 \\
\Uo 
\end{bmatrix}
}
We obtain the following equivalent homogeneous quadratic program:
\begin{mini}|s|[2] 
{\Uo}{\Uxx^TP_0(x_0)\Uxx}{}{}
\addConstraint{\Uxx^TP_i(x_0)\Uxx}{\leq 0,}{\quad i = 1,\hdots,m} \label{eq:MPC_Hom} 
\end{mini} 
\begin{remark}
Although the homogeneous quadratic form is a less common MPC formulation, it will allow us to readily apply the proposed SOCP relaxation of Theorem \ref{thmSOCP}.
\end{remark}

\section{Adversarial MPC with Non-Convex Quadratic Functions}
Provided the matrices $P_i~(i = 0,\hdots,m)$ of (\ref{eq:MPC_Hom}) satisfy the conditions of Theorem \ref{thmSOCP}, the (possibly non-convex) QCQP can be solved exactly via its SOCP relaxation. However, a priori it is not easy to see what system properties and conditions of the MPC problem are necessary to ensure Theorem \ref{thmSOCP} applies. The following theorem identifies these system properties and conditions.
\DeclarePairedDelimiter\set\{\}

\begin{thm} \label{thmSigmaRegion}
Consider the homogeneous MPC formulation (\ref{eq:MPC_Hom}) for controlling the discrete linear system (\ref{eq:discrete_dynamics}) over a horizon length $N$. Define $n = Nn_u$ as the dimension of the decision variable $\Uo$. Let the system dynamics $(A,B)$, cost and constraint matrices ($Q_i, R_i, S_i, i = 0,\hdots,m$) be such that the family of matrices $M_i \in \mathbb{S}^n (i = 0,\hdots,m)$ defined by (\ref{eq:Mi}) is uniformly almost off-diagonal non-positive with respect to a given vector $\sigma \in \{-1,+1\}^n$. Then (\ref{eq:MPC_Hom}) can be solved exactly using the SOCP relaxation (\ref{eq:SOCP}) and reconstructing $\Uo$ according to (\ref{eq:z}) with $\bar{\sigma}^+ = \begin{bmatrix}
1 && \sigma_1 && \hdots && \sigma_n 
\end{bmatrix}^T$ when $x_0 \in \mathbb{X}^{+}$ and $\bar{\sigma}^- = \begin{bmatrix}
1 && -\sigma_1 && \hdots && -\sigma_n 
\end{bmatrix}^T$ when $x_0 \in \mathbb{X}^{-}$ where $\mathbb{X}^{+}$ and $\mathbb{X}^{-}$ are given by:
\begin{align} \label{eq:sigmaRegion}
\mathbb{X}^{+} = \set{x \ | \ [x_0^TN_i + d_i^T]_{1k} \sigma_k \leq 0}, \\
\mathbb{X}^{-} = \set{x \ | \ [x_0^TN_i + d_i^T]_{1k} \sigma_k \geq 0}, \\
0 \leq i \leq m, 1 \leq k \leq n\ \nonumber
\end{align} 
\end{thm}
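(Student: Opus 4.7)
The plan is to reduce the statement to a direct application of Theorem~\ref{thmSOCP}. That theorem requires the entire family $\{P_i(x_0)\}_{i=0}^{m} \subset \mathbb{S}^{n+1}$ to be uniformly almost off-diagonal non-positive with respect to a single sign vector in $\{-1,+1\}^{n+1}$, so the goal is to show that the hypothesis on $\{M_i\}$, together with membership of $x_0$ in $\mathbb{X}^{+}$ (or $\mathbb{X}^{-}$), promotes the UAODNP property from the lower-right block $M_i$ of $P_i(x_0)$ to the full matrix.

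The main step partitions the index set $\{0,1,\dots,n\}$ of $P_i(x_0)$ into the pivot row/column $\{0\}$ and the block $\{1,\dots,n\}$, and writes out the off-diagonal conditions separately. For $1 \leq j < k \leq n$ the entry $[P_i(x_0)]_{jk}$ coincides with $[M_i]_{jk}$, so the hypothesis already yields $[M_i]_{jk}\sigma_j\sigma_k \leq 0$; since this inequality is invariant under $\sigma \mapsto -\sigma$, both candidate sign vectors $\bar{\sigma}^{\pm}$ in the statement handle the lower-right block. The remaining off-diagonals are the first-row entries $[P_i(x_0)]_{0k} = [x_0^T N_i + d_i^T]_{1k}$ for $1 \leq k \leq n$. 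Under $\bar{\sigma}^{+}$, with $\bar{\sigma}^{+}_0 = 1$ and $\bar{\sigma}^{+}_k = \sigma_k$, the required inequality $[P_i(x_0)]_{0k}\bar{\sigma}^{+}_0\bar{\sigma}^{+}_k \leq 0$ reduces to $[x_0^T N_i + d_i^T]_{1k}\sigma_k \leq 0$, which is precisely the defining inequality of $\mathbb{X}^{+}$. Under $\bar{\sigma}^{-}$ the product $\bar{\sigma}^{-}_0\bar{\sigma}^{-}_k = -\sigma_k$ flips the sign and produces the defining inequality of $\mathbb{X}^{-}$.

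Once the UAODNP property of $\{P_i(x_0)\}$ has been verified with the appropriate $\bar{\sigma}^{\pm}$, Theorem~\ref{thmSOCP} immediately delivers both exactness of the SOCP relaxation~(\ref{eq:SOCP}) and the reconstruction formula~(\ref{eq:z}), which evaluated at $\bar{\sigma}^{\pm}$ gives the stated recovery of $\Uo$. The only subtlety in the argument, and the reason $\mathbb{X}^{\pm}$ are defined as intersections of half-spaces over all $i$ and $k$, is that Theorem~\ref{thmSOCP} demands a \emph{single} sign vector that simultaneously satisfies the off-diagonal conditions for every constraint index $i \in \{0,\dots,m\}$ and every column $k \in \{1,\dots,n\}$. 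This global compatibility is the main obstacle to a cleaner statement, but it is already built into the definitions of $\mathbb{X}^{\pm}$, so no further argument is required beyond a careful inspection of the sign pattern of $P_i(x_0)$.
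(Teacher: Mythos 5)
Your proposal is correct and follows essentially the same route as the paper: verify that the uniform almost off-diagonal non-positivity of $\{M_i\}$ handles the lower-right block of $P_i(x_0)$, observe that the remaining first-row off-diagonal conditions are exactly the half-space inequalities defining $\mathbb{X}^{+}$ (resp.\ $\mathbb{X}^{-}$ after the sign flip induced by $\bar\sigma^{-}$), and then invoke Theorem~\ref{thmSOCP}. Your explicit remark that the block conditions are invariant under $\sigma \mapsto -\sigma$ makes the $\bar\sigma^{-}$ case slightly more transparent than the paper's ``nearly identical proof'' dismissal, but the argument is the same.
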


\begin{proof}
By Definition~\ref{def:odnp_family}, the family of matrices $P_i(x_0), i = 0,\hdots,m,$ is uniformly almost off-diagonal non-positive with respect to $\bar{\sigma}^+$ if:
\begin{align}
[P_i(x_0)]_{jk}\bar{\sigma}^+_j \bar{\sigma}^+_k \leq 0& \label{eq:ineq0} \\
0 \leq i \leq m, 1 \leq j < k \leq n+1& \nonumber
\end{align}
Given that $\bar\sigma^+_1 = 1$, it is straight-forward to see that this is equivalent to the conditions
\begin{align}
[x_0^TN_i + d_i^T]_{1k} \sigma_k \leq 0& \label{eq:ineq1}\\
[M_i]_{jk}\sigma_j \sigma_k \leq 0&  \label{eq:ineq2} \\
0 \leq i \leq m, 1 \leq j < k \leq n& \nonumber
\end{align}
Inequality (\ref{eq:ineq2}) is satisfied by the stated assumption that $M_i$ is uniformly almost off-diagonal non-positive with respect to $\sigma$. Thus we have (\ref{eq:ineq1})$\iff$(\ref{eq:ineq0}). Let $\mathbb{X}^+$ denote the set of vectors that satisfy (\ref{eq:ineq1}). When $x_0 \in \mathbb{X}^+$, the family of matrices $P_i(x_0)$ is uniformly almost off-diagonal non-positive with respect to $\bar\sigma^+$ and Theorem \ref{thmSOCP} applies. A nearly identical proof establishes that $P_i(x_0)$ is uniformly almost off-diagonal non-positive with respect to $\bar\sigma^-$ for $x_0 \in \mathbb{X}^-$. 
\end{proof}

\begin{remark}
Theorem \ref{thmSigmaRegion} allows us to characterize a class of non-convex MPC problems that can be solved using the SOCP relaxation of Theorem \ref{thmSOCP}. Notably, the solvability of the problem depends on the initial condition $x_0$ via the sets $\mathbb{X}^+$ and $\mathbb{X}^-$.

It is possible that, for different initial conditions, the conditions of Theorem \ref{thmSigmaRegion} are satisfied for different $\sigma$. 
The follow lemma further illustrates the relationship between $P_i(x_0)$, $\sigma$ and the corresponding sets $\mathbb{X}^+$ and $\mathbb{X}^-$.
\end{remark}
\begin{lem} Given a single matrix $P_i(x_0) \in \mathbb{S}^{n+1}, i \in \mathbb{N}$ that is almost off-diagonal non-positive with respect to some $\bar\sigma \in \{-1,+1\}^{n+1}$ then $-P_i(x_0)$ is also almost off-diagonal non-positive with respect to $\bar\sigma$ if and only if it is diagonal.\label{lemDiag}
\end{lem}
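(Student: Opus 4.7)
The plan is to work directly from Definition~\ref{def:odnp}. Writing the definition out for $P_i(x_0)$ with respect to $\bar\sigma$ gives $[P_i(x_0)]_{jk}\bar\sigma_j\bar\sigma_k \leq 0$ for all $0 \leq j < k \leq n$, while the same definition applied to $-P_i(x_0)$ with respect to the \emph{same} $\bar\sigma$ yields $-[P_i(x_0)]_{jk}\bar\sigma_j\bar\sigma_k \leq 0$ on the same index range. I would then combine the two inequalities to conclude $[P_i(x_0)]_{jk}\bar\sigma_j\bar\sigma_k = 0$ for every off-diagonal index pair.

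Next, I would use the fact that the entries of $\bar\sigma$ lie in $\{-1,+1\}$, so $\bar\sigma_j\bar\sigma_k \in \{-1,+1\}$ and in particular is nonzero. Dividing the equality above by $\bar\sigma_j\bar\sigma_k$ forces $[P_i(x_0)]_{jk} = 0$ for all $j < k$, and the symmetry of $P_i(x_0)$ propagates this to all off-diagonal entries. This establishes the forward direction: $P_i(x_0)$ must be diagonal.

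The converse is essentially immediate. If $P_i(x_0)$ is diagonal, then $[-P_i(x_0)]_{jk} = 0$ for all $j \neq k$, so trivially $[-P_i(x_0)]_{jk}\bar\sigma_j\bar\sigma_k = 0 \leq 0$ for any choice of $\bar\sigma \in \{-1,+1\}^{n+1}$, and in particular for the $\bar\sigma$ witnessing almost off-diagonal non-positivity of $P_i(x_0)$.

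There is no real obstacle here; the lemma is a bookkeeping consequence of the sign definition. The only subtle point worth flagging explicitly in the write-up is that the statement concerns both matrices being non-positive with respect to \emph{the same} sign vector $\bar\sigma$ — otherwise the conclusion fails, since one could always flip to $-\bar\sigma$ independently and the equivalence would break.
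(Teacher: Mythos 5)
Your proof is correct and follows essentially the same route as the paper: the converse is the same trivial observation about diagonal matrices, and your forward direction (combining the two sign inequalities to force every off-diagonal entry to vanish) is just the direct form of the paper's contrapositive argument (a nonzero off-diagonal entry of $P_i(x_0)$ gives a strictly negative product, hence a strictly positive one for $-P_i(x_0)$). Your closing remark about both matrices being tested against the \emph{same} $\bar\sigma$ is a worthwhile clarification, since the definition itself is existential in $\sigma$.
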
 
\begin{proof}
Sufficient: $P_i(x_0)$ is diagonal implies $[P_i(x_0)]_{jk} = 0 \ (1 \leq j < k \leq n+1)$. Applying Definition~\ref{def:odnp}, a matrix is almost off-diagonal non-positive with respect to $\bar\sigma$
if $[P_i(x_0)]_{jk}\bar\sigma_j\bar\sigma_k \leq 0 \ (1 \leq j < k \leq n+1)$. Given a diagonal matrix, this relationship is true for arbitrary $\bar\sigma$. If $P_i(x_0)$ is diagonal then $-P_i(x_0)$ is also diagonal and therefore almost off-diagonal non-positive with respect to any $\bar\sigma$. Necessary: Consider a matrix $P_i(x_0) \in \mathbb{S}^{n+1} $ with element $[P_i(x_0)]_{jk} \neq 0, (j < k)$ that is almost off-diagonal non-positive with respect to $\bar\sigma$. This implies $[P_i(x_0)]_{jk}\bar\sigma_j\bar\sigma_k < 0$ and thus $[-P_i(x_0)]_{jk}\bar\sigma_j\bar\sigma_k > 0$. Therefore $-P_i(x_0)$ cannot be almost off-diagonal non-positive with respect to $\bar\sigma$.
\end{proof}
\begin{remark}[] Diagonal $P_i(x_0)$ includes the important case of norm bounds on the control vector as given by 
$l_b \leq \Uo^T R \Uo \leq u_b$ where $R$ is diagonal with non-negative entries and $l_b, u_b \in \R$ are the lower and upper bounds respectively. When $l_b > 0$ and $R$ contains more than one non-zero entry, the resulting constraint is non-convex. This can be rewritten as two constraints $-\Uo^T R \Uo \leq l_b$ and $\Uo^T R \Uo \leq u_b$ both of which give diagonal matrices when put in the form (\ref{eq:Pix0}). We note that non-convex control constraints of this form arise in thrust vectoring problems \cite{Aci2013}.
\end{remark}

\begin{remark}[]
Linear state weightings of the form $c^T\X$ with $c \in \R^{(N+1)n_x}$ translate to off-diagonal entries in (\ref{eq:Pix0}). If a lower bound $l_b$ and upper bound $u_b$ is applied to a given state weighting, one obtains two equal and opposite matrices $P_i(x_0)$ and $-P_i(x_0)$ with off-diagonal terms. Applying Lemma \ref{lemDiag}, both of the matrices cannot be almost off-diagonal non-positive with respect to a given $\bar\sigma$. Thus Theorem \ref{thmSigmaRegion} does not support MPC formulations with lower and upper bounds applied to a given state weighting. In adversarial control this is not a major limitation in practice as one is not attempting to keep the system state within some prescribed bounds.
\end{remark}

\subsection{Adversarial Control of Positive Systems}
The previous section established state-dependent conditions under which the homogeneous adversarial MPC formulation (\ref{eq:MPC_Hom}) can be solved by applying Theorem \ref{thmSOCP}. By restricting ourselves to positive systems, we establish conditions under which Theorem \ref{thmSOCP} holds for all $x_0 \geq 0$ (i.e. the positive orthant).
\begin{thm} \label{thm:Positive}
Consider the homogeneous MPC formulation (\ref{eq:MPC_Hom}) for controlling a discrete-time linear system (\ref{eq:discrete_dynamics}) over a horizon length $N$. Let the system dynamics $(A,B)$ be positive as described in Definition \ref{def:positiveSystem}. Define $n = Nn_u$ as the dimension of the decision variable $\Uo$.  Let the cost and constraint matrices be such that $Q_i \leq 0, \ [R_i]_{jk} \leq 0 \ (j \neq k), \ S_i \leq 0, \ q_i \leq 0, \ r_i \leq 0$ for $ i = 0,\hdots,m$. Then (\ref{eq:MPC_Hom}) can be solved using Theorem \ref{thmSOCP} with $\bar\sigma^+ = \mathds{1}_{n+1}$ when $x_0 \geq 0$.
\end{thm}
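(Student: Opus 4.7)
The plan is to reduce this to a direct application of Theorem~\ref{thmSigmaRegion} with the sign vector $\sigma = \mathds{1}_n$. To do that, I need to verify the two conditions of that theorem: (i) the family $\{M_i\}_{i=0}^m$ is uniformly almost off-diagonal non-positive with respect to $\sigma = \mathds{1}_n$, and (ii) the initial condition $x_0 \geq 0$ lies in the set $\mathbb{X}^+$ defined in \eqref{eq:sigmaRegion}.

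First I would establish a sign lemma on the data matrices. Since $A \geq 0$ and $B \geq 0$, powers $A^k$ are element-wise non-negative, and so $S_x$ and $S_u$ (as defined in \eqref{eq:XUSxSu}) are entrywise non-negative. This is the one structural fact I need; everything else is a direct sign-tracking calculation in the formulas \eqref{eq:Mi}--\eqref{eq:vi}.

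Next, for (i), I would expand $M_i = S_u^T Q_i S_u + S_u^T S_i + S_i^T S_u + R_i$ and verify that all off-diagonal entries are non-positive. Each of the first three summands is a product of entrywise non-negative matrices ($S_u$ or $S_u^T$) with an entrywise non-positive matrix ($Q_i$ or $S_i$), so all of their entries (diagonal and off-diagonal) are non-positive. The off-diagonal entries of $R_i$ are non-positive by hypothesis. Summing, $[M_i]_{jk} \leq 0$ for $j \neq k$, which is precisely almost off-diagonal non-positivity with respect to $\mathds{1}_n$ (since $\sigma_j \sigma_k = 1$ for all $j,k$). This holds uniformly in $i$, giving (i).

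For (ii), I would show that $x_0^T N_i + d_i^T \leq 0$ entrywise whenever $x_0 \geq 0$. Using $N_i = S_x^T Q_i S_u + S_x^T S_i$ and the sign information above, $N_i \leq 0$ entrywise; similarly $d_i = S_u^T q_i + r_i \leq 0$. Thus for $x_0 \geq 0$, the row vector $x_0^T N_i + d_i^T$ is entrywise non-positive, so $[x_0^T N_i + d_i^T]_{1k}\sigma_k \leq 0$ for all $k$ and all $i$, placing $x_0$ in $\mathbb{X}^+$. Invoking Theorem~\ref{thmSigmaRegion} with $\bar\sigma^+ = \mathds{1}_{n+1}$ then yields the claim. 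I do not anticipate any real obstacle here; the argument is essentially bookkeeping of signs, and the only non-obvious ingredient is Lemma~\ref{lem:positiveSystem}'s underlying observation that $S_x, S_u \geq 0$ follows from positivity of $(A,B)$.
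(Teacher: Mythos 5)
Your proposal is correct and follows essentially the same argument as the paper: establish $S_x, S_u \geq 0$ from positivity of $(A,B)$, track signs through \eqref{eq:Mi}--\eqref{eq:di} to get the off-diagonal non-positivity of $M_i$ and $N_i \leq 0$, $d_i \leq 0$, and conclude for $x_0 \geq 0$. The only cosmetic difference is that you route through Theorem~\ref{thmSigmaRegion} while the paper verifies the off-diagonal non-positivity of $P_i(x_0)$ directly and invokes Theorem~\ref{thmSOCP}; the underlying computation is identical.
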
 
\begin{proof}
The proof is simple but involves some tedious algebra. For clarity, we outline the main steps below:
\begin{enumerate}
  \item Show that $[M_i]_{jk} \leq 0, \ \ 1 \leq j < k \leq n,\ 0 \leq i \leq m$. \newline
  \textit{Proof}: See below
  \item Show that $N_i \leq 0, d_i \leq 0, \ \ 0 \leq i \leq m$\newline
  \textit{Proof}: See below
  \item $N_i \leq 0, d_i \leq 0, x_0 \geq 0 \implies (x_0^TN_i + d_i^T) \leq 0$
  \item Steps 1 and 3 imply $[P_i(x_0)]_{jk} \leq 0 \ \forall \ (j \neq k, x_0 \geq 0)$. Therefore $P_i(x_0)$ is uniformly almost off-diagonal non-positive with respect to $\bar\sigma^+ = \mathds{1}_{n+1}$ and (\ref{eq:MPC_Hom}) can be solved using Theorem \ref{thmSOCP}.
\end{enumerate}
\textit{Step 1)} Recall that the product of two non-negative matrices is itself non-negative. We are given that $A \geq 0, B \geq 0$. By induction, the products $A^i \geq 0, A^iB \geq 0,~\forall~i \in \mathbb{N}$. This implies $\Sx \geq 0$ and $\Su \geq 0$ as all the individual non-zero entries shown in (\ref{eq:XUSxSu}) can be written in terms of $A^i$ and $A^iB$ for some $i \in \mathbb{N}$. Recall that the product of a non-negative matrix and non-positive matrix is non-positive.  So $S_i \leq 0$, $S_u \geq 0, Q_i \leq 0 \implies S_u^TS_i \leq 0, \ S_u^TQ_iS_u \leq 0$ and therefore $S_u^TQ_iS_u + S_u^TS_i + S_i^TS_u \leq 0$. Lastly, we are given that $[R_i]_{jk} \leq 0 \ (j \neq k)$. From (\ref{eq:Mi}), $M_i$ = $S_u^TQ_iS_u + S_u^TS_i + S_i^TS_u + R_i$. Combining the previous results establishes that $[M_i]_{jk} \leq 0 \ (1 \leq j < k \leq n, 0 \leq i \leq m)$. \newline \newline
\textit{Step 2)} Given $S_x \geq 0$, $S_u \geq 0$, $Q_i \leq 0$, $S_i \leq 0$, $q_i \leq 0$, and $r_i \leq 0$, similar reasoning as Step 1 establishes that $N_i \leq 0$ and $d_i \leq 0$ as defined by (\ref{eq:Ni}) and (\ref{eq:di}) respectively.
\end{proof}
\begin{remark}
As $\bar\sigma^+= \mathds{1}_{n+1}$ determines the sign pattern of the solution, the resulting control sequence $\Uo$ is non-negative. A positive system will remain in the positive orthant under the action of this control sequence per Lemma \ref{lem:positiveSystem}. 
\end{remark}
\begin{remark}
Theorem~\ref{thm:Positive} includes the practical case of an objective function with diagonal $Q_0 < 0$ and diagonal $R_0 > 0$. This represents a situation in which an adversary is attempting to push the system away from the origin while minimizing the energy expended to do so.
\end{remark}

\section{Numerical Examples}
We demonstrate our results on some simple systems. To clearly point out sources of non-convexity, we write the examples in uncondensed form with state variables appearing in the cost function. However, the resulting problems are solved by converting the problem to the form of (\ref{eq:MPC_Hom}) and applying Theorem \ref{thmSOCP}.
 
\subsection{Indefinite Cost Function}
Our first example applies an indefinite cost function to a two-state system. This allows us to show graphically the regions $\mathbb{X}^+$ and $\mathbb{X}^-$ where we can solve the problem exactly. Consider the following discrete state-space model: 
\begin{equation*}
A = \begin{bmatrix}
0.9 && -0.2 \\
0 && 0.9 \\
\end{bmatrix}
\quad
B = \begin{bmatrix}
0.2 && -0.05 \\
0 && 2 \\
\end{bmatrix}
\end{equation*}
We apply an indefinite quadratic objective of minimizing the product of the two states over a horizon $N$. The control at each step $k$ is constrained to an annulus in $\R^2$. Additionally, the total control effort over the horizon $N$ is constrained, reflecting energy constraints. 
\begin{mini*}|s| 
{}{\sum_{k=0}^{N} x(k)^T\begin{bmatrix} 0 && 1 \\ 1 && 0 \end{bmatrix}{x(k)}}
{}{}
\addConstraint{\X}{= \Sx x(0) + \Su \Uo}
\addConstraint{0.2}{\leq \norm{u(k)}_2^2 \leq 0.5,}{\quad k = 0,\hdots,N-1}
\addConstraint{0}{\leq \norm{\Uo}_2^2 \leq \frac{N}{3}}
\label{eq:MPCEx1}
\end{mini*} 
We rewrite this in condensed form. Dropping constant terms, the resulting cost function becomes:
\begin{equation*}
G_0(x(0),\Uo) = \\
\Uo^T M_0 \Uo + 2(x(0)^TN_0)\Uo
\end{equation*}
Where for $N = 2$ we have:
\begin{equation*}
M_0 = 
\begin{bmatrix}
       0 &&   0.0724   &&       0 &&    0.0360 \\
    0.0724 &&   -0.0506  &&   0.0360 &&    -0.0260 \\
         0  &&   0.0360   &&        0  &&   0.0400 \\
    0.0360 &&   -0.0260 &&    0.0400  &&  -0.0200 
\end{bmatrix}
\end{equation*}

\begin{equation*}
N_0 = \begin{bmatrix}
         0  &&  0.3258   &&      0 &&   0.1620 \\
    0.3258   && -0.2187  &&  0.1620  &&  -0.1125
\end{bmatrix}
\end{equation*}

Here there is no offset term $d_0$ as we have no linear terms $(q_0, r_0)$ in our original, uncondensed cost. $M_0$ is off-diagonal non-positive with respect to $\sigma = \begin{bmatrix}
1 && -1 && 1 && -1
\end{bmatrix}$.
From Theorem \ref{thmSigmaRegion} the SOCP relaxation is exact for $x(0) \in \mathbb{X}^+ \cup \mathbb{X}^-$ where $\mathbb{X}^+ = \set{x \ | \ x \in \R^2, [x(0)^TN_0]_{1k} \sigma_k \leq 0 \ (1 \leq k \leq 4)}$ and $\mathbb{X}^-$ is similarly defined. Although $\mathbb{X}^+$ and $\mathbb{X}^-$ are described by the intersection of four hyperplanes which pass through the origin, we can limit ourselves to the two hyperplanes whose normal vector has the smallest inner product. This gives the following:
\begin{align*}
&\mathbb{X}^+ = \{x \ | \ x \in \R^2, x_2 \leq 0, -0.32358x_1 + 0.2187x_2 \leq 0\} \\
&\mathbb{X}^- = \{x \ | \ x \in \R^2, x_2 \geq 0, -0.32358x_1 + 0.2187x_2 \geq 0\}
\end{align*}

Figure \ref{fig:sigmaplot} shows the regions $\mathbb{X}^+$, $\mathbb{X}^-$ when $N = 2$. A sample trajectory is shown starting from $x(0) = [0 \ 0.1]^T$. With a horizon of $N = 2$ we only obtain control commands $u(0)$ and $u(1)$. As is standard in MPC, we apply the first command $u(0)$ which takes us to state $x(1)$. Redefining $x(1)$ as our new initial condition we then resolve the problem.  We repeat this process $10$ times to obtain the trajectory shown.   
\begin{figure}[htp]
    \centering
    \includegraphics[width=0.5\textwidth]{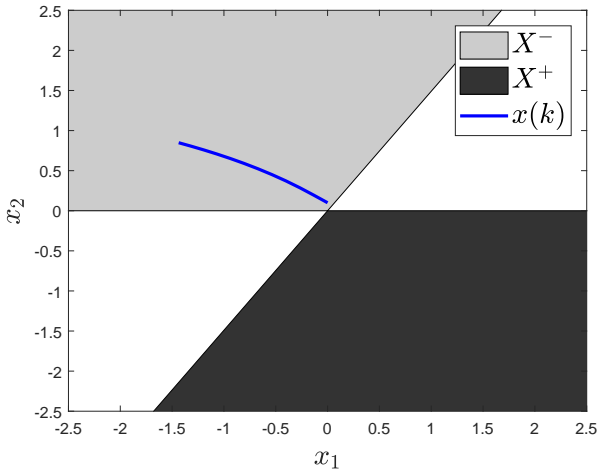}
    \caption{Indefinite MPC example ($N = 2$) starting at $x(0) = [0 \ 0.1]^T$}
    \label{fig:sigmaplot}
\end{figure}
\begin{remark}
$\mathbb{X}^+$ and $\mathbb{X}^-$ are described by the intersection of halfspaces formed from the columns of $N_0 \in \R^{n_x \times Nn_u}$. Interestingly for this problem, as $N$ is increased the sets $\mathbb{X}^+$, $\mathbb{X}^-$ cover a larger portion of $\R^2$. For example, Figure \ref{fig:sigmaplot2} shows $\mathbb{X}^+$, $\mathbb{X}^-$ for $N = 20$. With this horizon length we can solve a trajectory starting at $x(0) = [1 \ 0.5]^T$ which is outside the solvable regions when $N = 2$.
\end{remark}
\begin{figure}[htp]
    \centering
    \includegraphics[width=0.5\textwidth]{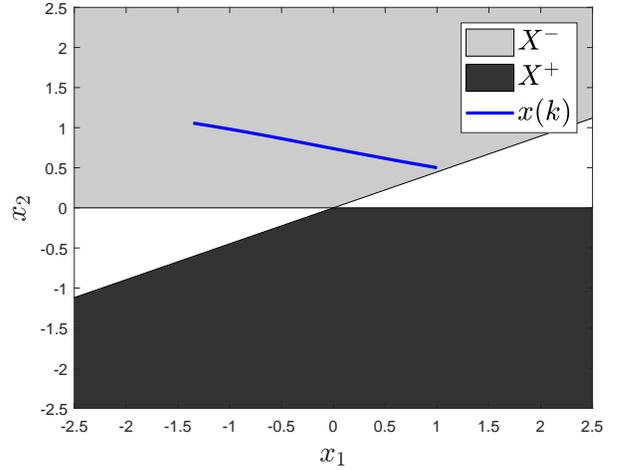}
    \caption{Indefinite MPC example ($N = 20$) starting at $x(0) = [1 \ 0.5]^T$}
    \label{fig:sigmaplot2}
\end{figure}
\begin{remark} Minimizing or maximizing the product of two states is frequently seen in economic MPC formulations. In some instances, an indefinite stage cost can still yield a convex problem if applied over a sufficiently long horizon \cite{Ber2018}. That does not occur here. Instead the cost function remains indefinite with $N$ positive eigenvalues and $N$ negative eigenvalues for a given horizon length $N$.
\end{remark}
\subsection{Adversarial Control of Double Integrator}
Consider a simple planar double integrator model of an autonomous vehicle with position states $(p_x, p_y)$ and associated velocity states $(v_x, v_y)$. State feedback damping terms regulate the system to the origin. An adversary is able to apply disturbance forces $(u_1, u_2)$ to the system. The continuous dynamics are given by:
\begin{align*} \label{eq:DoubleInt}
\frac{dp_x}{dt} &= -0.1p_x + v_x & \frac{dv_x}{dt} &= -0.1v_x + u_1 \\
\frac{dp_y}{dt} &= -0.1p_y + v_y & \frac{dv_y}{dt} &= -0.1v_y + u_2
\end{align*}
We discretize the continuous model using a zero-order-hold with 0.2s sample time obtaining matrices $(A,B)$ with state vector $x = [p_x \ p_y \ v_x \ v_y]^T$ and control $u = [u_1 \ u_2]^T$. By inspection the discrete model is positive.
\begin{equation*}
\begin{gathered}
A = \begin{bmatrix}
0.9802 && 0 && 0.196 && 0 \\
0 && 0.9802 && 0 && 0.196 \\
0 && 0 && 0.9802 && 0\\
0 && 0 && 0 && 0.9802
\end{bmatrix} \\
B = \begin{bmatrix}
0.01974 && 0 \\
0 && 0.01974 \\
0.198 && 0 \\
0 && 0.198
\end{bmatrix}
\end{gathered}
\end{equation*}
We are given a safety envelope defined by the union of two ellipsoids centered at the origin. The adversaries objective is ensure the system's position is outside this safe operating envelope by the end of a horizon $N = 10$ while minimizing energy expenditure. This terminal position constraint is non-convex. The available control magnitude is bounded to be within an annulus representative of thrust vectoring constraints. The resulting adversarial MPC problem is:
\begin{mini*}|s| 
{\Uo}{\norm{\Uo}}{}{}
\addConstraint{1.0}{ \leq \Big( \frac{p_x(k)}{1.0} \Big)^2 + \Big( \frac{p_y(k)}{0.5}\Big)^2 ,\, \,  k = N}
\addConstraint{1.0}{ \leq \Big( \frac{p_x(k)}{0.5} \Big)^2 + \Big( \frac{p_y(k)}{1.0}\Big)^2 ,\, \,  k = N}
\addConstraint{0.04}{ \leq u_1^2(k) + u_2^2(k) \leq 0.25, \quad k = 0,\hdots,N-1}
%\label{eq:MPCEx2}
\end{mini*} 
Written in the standard quadratic form of (\ref{eq:F}), the terminal position constraints have matrices $Q_i \leq 0$ while the control constraints consist of diagonal $R_i$. Thus Theorem \ref{thm:Positive} applies and we can solve this non-convex problem when $x_0 \geq 0$. Figure \ref{fig:stayout} plots the ellipse bounds in the positive orthant and shows sample trajectories with varying initial positions. Figure \ref{fig:stayout_control} shows the associated control command. The initial velocities are zero in each example. Starting at point $(0,0)$ the adversarial control pushes the system towards the closest point on the border of the safety envelope, reaching this point only at the end in order to minimize the energy expended. Starting at $(0.5,0)$ the damped dynamics of $A$ are evident as the trajectory initially moves towards the origin. Finally, starting closer to the boundary at $(0.1,0.7)$, the trajectory overshoots the boundary. This is due to the non-convex lower bound on the control magnitude which prevents us from turning off the control.
\begin{figure}[h]
    \centering
    \includegraphics[width=0.5\textwidth]{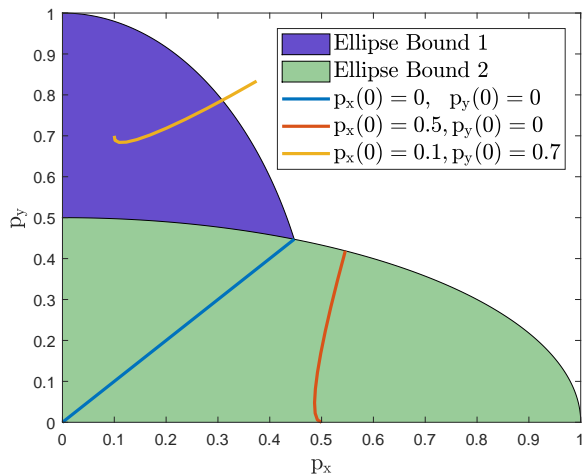}
    \caption{Safety envelope violation with minimum energy expenditure}
    \label{fig:stayout}
\end{figure}
\begin{figure}[h]
    \centering
    \includegraphics[width=0.5\textwidth]{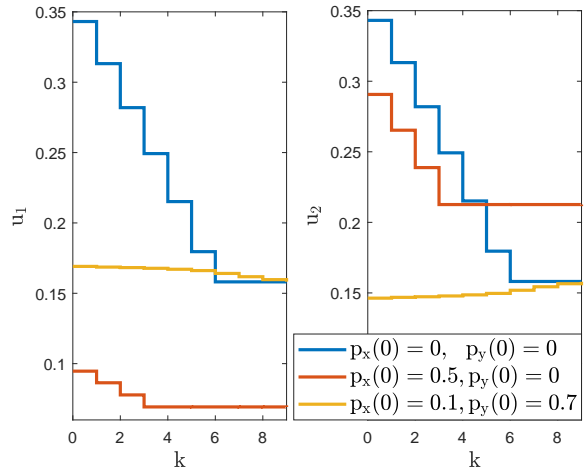}
    \caption{Control history for safety envelope violation}
    \label{fig:stayout_control}
\end{figure}
\subsection{Maximizing Voltage Mismatch within a Microgrid}
Finally we consider a simple microgrid model consisting of three buses. Without loss of generality, the origin is taken to be the equilibrium point. Each bus $i$ is modeled as a capacitor $c_i$ with voltage $v_i$. The buses are interconnected by resistive transmission lines $r_2$ and $r_3$. Collectively they supply power to a resistive load $r_1$ and a constant power load whose linearized dynamics can be represented by a negative resistance $r_4$. An adversary is able to inject current into the system through $i_1$ and $i_2$. Table \ref{table:1} lists the parameters. 
\begin{table}[h]
\caption{Microgrid Parameters}
\begin{center}
 \begin{tabular}{||c c c c c c c||} 
 \hline
 $c_1$ & $c_2$ & $c_3$ & $r_1$ & $r_2$ & $r_3$ & $r_4$\\ [0.5ex] 
 \hline
 0.2 & 0.2 & 0.2 & 8 & 1 & 0.5 & -10 \\
 \hline
\end{tabular}
\end{center}
\label{table:1}
\end{table}
\\The continuous dynamics are given by:
\begin{align*} \label{eq:PowerSystem}
c_1\frac{dv_1}{dt} &= -\frac{1}{r_1}v_1 - \frac{1}{r_2}(v_1 - v_2) + i_1 \\
c_2\frac{dv_2}{dt} &= -\frac{1}{r_2}(v_2 - v_1) - \frac{1}{r_3}(v_2 - v_3) \\
c_3\frac{dv_3}{dt} &= -\frac{1}{r_3}(v_3 - v_2) - \frac{1}{r_4}v_3 + i_2
\end{align*}
%Circuit Diagram
%\begin{figure*}\centering
%\begin{circuitikz}  \draw
%(0,0) to[R, l=$r_1$] (0,3)
%(0,3) -- (2,3)
%to [isource, l=$i_1$, invert] (2,0)
%(2,3) -- (4,3)
%to [C, v=$v_1$] (4,0)
%(4,3) to [R, l=$r_2$] (7,3)
%to [C, v=$v_2$] (7,0)
%(7,3) to [R, l=$r_3$] (10,3)
%to [C, v=$v_3$] (10,0)
%(10,3) -- (12,3)
%to [isource, l=$i_2$, invert] (12,0)
%(12,3) -- (14,3)
%to [R, l=$r_4$] (14,0)
%(14,0) -- (0,0)
%\end{circuitikz}
%\caption{Microgrid Model}
%\end{figure*}
The discrete model with time-step 0.1s is:
\begin{equation*}
A = \begin{bmatrix}
0.6282 && 0.2221 && 0.1026 \\
0.2221 && 0.4171 && 0.3646 \\
0.1026 && 0.3646 && 0.5663
\end{bmatrix}
\end{equation*}
\begin{equation*}
B = \begin{bmatrix}
0.3941 && 0.0213 \\
0.0716 && 0.1266 \\
0.0213 && 0.3616
\end{bmatrix}
\end{equation*}
with state vector $x = [v_1 \ v_2 \ v_3]^T$ and control $u = [i_1 \ i_2]^T$. By inspection the discrete model is positive.

In traditional microgrid voltage regulation, the controls would attempt to achieve consensus on the voltages $(v_1 = v_2 = v_3$). Here we focus on maximizing disagreement by injecting currents $i_1$ and $i_2$. The voltage disagreement at time index $k$ is defined as:
\begin{equation*}
J(k) = (v_1(k) - v_2(k))^2 + (v_1(k) - v_3(k))^2 + (v_2(k) - v_3(k))^2
\end{equation*}
We use a horizon length of $N = 20$ and maximize disagreement at the end.
\begin{mini*}|s| 
{\Uo}{-J(N)}
{}{}
\addConstraint{i_1^2(k) + i_2^2(k)}{\leq 1,}{\qquad k = 0,\hdots,N-1}
\label{eq:MPCPower} 
\end{mini*} 
\setcounter{MaxMatrixCols}{40}
The resulting condensed MPC formulation has two negative eigenvalues, with the rest zero. Although the system is positive, the matrix of the quadratic cost function $-J(N)$ contains positive off-diagonal terms and thus we cannot apply Theorem \ref{thm:Positive}. A simple numerical check reveals that the problem is uniformly almost off-diagonal non-positive with respect to $\sigma = [\begin{smallmatrix}
1 && -\mathds{1}^T_{30} && 1 && -1 && 1 && -1 && 1 && -1 && 1 && -1 && 1 && -1
\end{smallmatrix}]$.

Figure \ref{fig:voltagecurrent} shows the resulting state and control trajectory with all states initially zero. At the end, the disagreement in voltages is maximized. As $\sigma$ contains both $+1$ and $-1$ entries the resulting control sequences $i_1(k)$ and $i_2(k)$ contain both positive and negative terms.

\begin{figure}[h]
    \centering
    \includegraphics[width=0.5\textwidth]{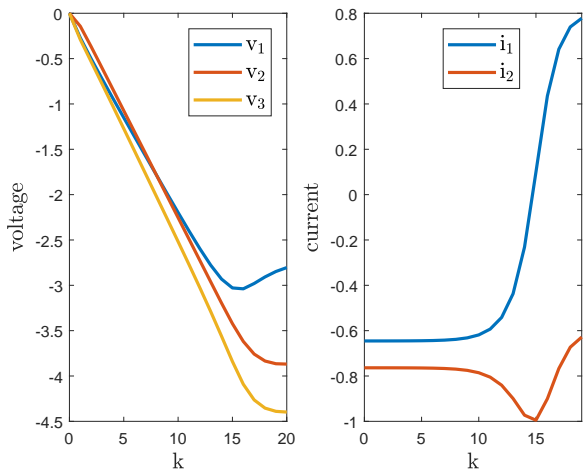}
    \caption{Maximizing voltage disagreement in a microgrid}
    \label{fig:voltagecurrent}
\end{figure}

\subsection{Implementation Details}
All examples were solved using MOSEK \cite{Mos2017} in conjunction with YALMIP \cite{Lof2004}. For sufficiently small problems we also solved the original non-convex QCQP using the global optimization solver \textit{BMIBNB} in YALMIP. This solver implements a simple branch-and-bound algorithm which can find global solutions to arbitrary optimization problems of modest size. In all instances, the solution obtained matched that provided by the SOCP formulation. Although our focus is not on solver efficiency, we note that for a problem with 20 decision variables the SOCP formulation was consistently solved in under $50ms$ while solving with \textit{BMIBNB} took over 100 seconds. Larger problems were not validated with \textit{BMIBNB} due to excessive runtimes. 

\section{CONCLUSIONS}
In this work we established conditions under which non-convex, adversarial model predictive control problems can be solved to global optimality via second-order cone programming. For general systems, the global solution can only be obtained in a subspace of the whole state-space. It was shown that many adversarial problems are readily solved for systems whose dynamics are invariant with respect to the positive orthant. Future work will examine whether similar conditions can be identified for systems which exhibit other forms of invariance. For cases in which the system does not admit an exact SOCP solution, we plan to combine our methods with heuristics for approximately solving the resulting indefinite QCQP \cite{Par2017}. 
 
\addtolength{\textheight}{-12cm}   % This command serves to balance the column lengths
                                  % on the last page of the document manually. It shortens
                                  % the textheight of the last page by a suitable amount.
                                  % This command does not take effect until the next page
                                  % so it should come on the page before the last. Make
                                  % sure that you do not shorten the textheight too much.

%\nocite{*}
\bibliography{myref}

\begin{thebibliography}{10}

\bibitem{Liu2009}
Y.~Liu, P.~Ning, and M.~K. Reiter, ``False data injection attacks against state
  estimation in electric power grids,'' in {\em Proceedings of the 16th ACM
  Conference on Computer and Communications Security}, CCS '09, (New York, NY,
  USA), pp.~21--32, ACM, 2009.

\bibitem{Pas2013}
F.~{Pasqualetti}, F.~{Dorfler}, and F.~{Bullo}, ``Attack detection and
  identification in cyber-physical systems,'' {\em IEEE Transactions on
  Automatic Control}, vol.~58, pp.~2715--2729, Nov 2013.

\bibitem{Jin2017}
M.~{Jin}, J.~{Lavaei}, and K.~{Johansson}, ``A semidefinite programming
  relaxation under false data injection attacks against power grid ac state
  estimation,'' in {\em 2017 55th Annual Allerton Conference on Communication,
  Control, and Computing (Allerton)}, pp.~236--243, Oct 2017.

\bibitem{Deb2015}
B.~DeBruhl, S.~Weerakkody, B.~Sinopoli, and P.~Tague, ``Is your commute driving
  you crazy?: A study of misbehavior in vehicular platoons,'' in {\em
  Proceedings of the 8th ACM Conference on Security \& Privacy in Wireless and
  Mobile Networks}, WiSec '15, (New York, NY, USA), pp.~22:1--22:11, ACM, 2015.

\bibitem{Bro2018}
H.~E. {Brown} and C.~L. {Demarco}, ``Risk of cyber-physical attack via load
  with emulated inertia control,'' {\em IEEE Transactions on Smart Grid},
  vol.~9, pp.~5854--5866, Nov 2018.

\bibitem{Lip2016}
T.~Lipp and S.~Boyd, ``Antagonistic control,'' {\em Systems \& Control
  Letters}, vol.~98, pp.~44 -- 48, 2016.

\bibitem{Che2018}
Y.~{Chen}, S.~{Kar}, and J.~M.~F. {Moura}, ``Cyber-physical attacks with
  control objectives,'' {\em IEEE Transactions on Automatic Control}, vol.~63,
  pp.~1418--1425, May 2018.

\bibitem{Tei2015}
A.~{Teixeira}, K.~{Paridari}, H.~{Sandberg}, and K.~H. {Johansson}, ``Voltage
  control for interconnected microgrids under adversarial actions,'' in {\em
  2015 IEEE 20th Conference on Emerging Technologies Factory Automation
  (ETFA)}, pp.~1--8, Sep. 2015.

\bibitem{Ran2011}
A.~{Rantzer}, ``Distributed control of positive systems,'' in {\em 2011 50th
  IEEE Conference on Decision and Control and European Control Conference},
  pp.~6608--6611, Dec 2011.

\bibitem{Kim2003}
S.~Kim and M.~Kojima, ``Exact solutions of some nonconvex quadratic
  optimization problems via sdp and socp relaxations,'' {\em Computational
  Optimization and Applications}, vol.~26, pp.~143--154, Nov 2003.

\bibitem{Par1991}
P.~M. Pardalos and S.~A. Vavasis, ``Quadratic programming with one negative
  eigenvalue is np-hard,'' {\em Journal of Global Optimization}, vol.~1,
  pp.~15--22, Mar 1991.

\bibitem{Ran2018}
A.~{Rantzer} and M.~E. {Valcher}, ``A tutorial on positive systems and large
  scale control,'' in {\em 2018 IEEE Conference on Decision and Control (CDC)},
  pp.~3686--3697, Dec 2018.

\bibitem{Bor2017}
F.~{Borrelli}, A.~{Bemporad}, and M.~{Morari}, {\em Predictive Control for
  Linear and Hybrid Systems}.
\newblock Cambridge University Press, 2017.

\bibitem{Aci2013}
B.~{Acikmese}, J.~M. {Carson}, and L.~{Blackmore}, ``Lossless convexification
  of nonconvex control bound and pointing constraints of the soft landing
  optimal control problem,'' {\em IEEE Transactions on Control Systems
  Technology}, vol.~21, pp.~2104--2113, Nov 2013.

\bibitem{Ber2018}
J.~{Berberich}, J.~{Kohler}, F.~{Allgower}, and M.~A. {Muller}, ``Indefinite
  linear quadratic optimal control: Strict dissipativity and turnpike
  properties,'' {\em IEEE Control Systems Letters}, vol.~2, pp.~399--404, July
  2018.

\bibitem{Mos2017}
M.~ApS, {\em The MOSEK optimization toolbox for MATLAB manual. Version 8.1.},
  2017.

\bibitem{Lof2004}
J.~L{\"{o}}fberg, ``Yalmip : A toolbox for modeling and optimization in
  matlab,'' in {\em In Proceedings of the CACSD Conference}, (Taipei, Taiwan),
  2004.

\bibitem{Par2017}
J.~{Park} and S.~{Boyd}, ``{General Heuristics for Nonconvex Quadratically
  Constrained Quadratic Programming},'' {\em arXiv e-prints},
  p.~arXiv:1703.07870, Mar 2017.

\end{thebibliography}
\bibliographystyle{ieeetr}

\end{document}